\documentclass{article}

\usepackage{cite}
\usepackage[pdftex]{graphicx}
\usepackage{amsmath}
\usepackage{amsfonts}
\usepackage{amssymb}
\usepackage{amsthm}
\usepackage{color}
\usepackage[usenames,dvipsnames,svgnames,table]{xcolor}
\usepackage{algorithmic}
\usepackage{algorithm}
\usepackage{array}
\usepackage{url}
\usepackage{float}
\usepackage{graphicx}
\usepackage{amsthm,amssymb,latexsym,graphics,amscd,amsmath,amscd,enumerate,color}
\usepackage[T1]{fontenc}
\usepackage[english]{babel}
\usepackage{multicol}
\usepackage{multirow}
\usepackage{array}
\usepackage{booktabs}
\usepackage[usenames,dvipsnames,svgnames,table]{xcolor}
\usepackage{graphicx}
\usepackage{tikz}
\usetikzlibrary{matrix}
\usepackage{wrapfig}
\usepackage{graphics}
\usepackage{lmodern} \normalfont
\usepackage{mathrsfs}
\usepackage{latexsym}
\usepackage{yfonts}
\usepackage[all,cmtip]{xy}
\usepackage{mathtools}

\newcommand{\Ccl}{\mathbb{C}\ell}
\newcommand{\Cl}{\mathcal{C}\ell}

\newcommand{\mb}{\mathbb}

\newcommand{\Sp}{\mbox{Spin}}
\newcommand{\Spc}{\mbox{Spin}^{\mb{C}}}

\newtheorem{theorem}{Theorem}

\newtheorem{lemma}[theorem]{Lemma}

\hyphenation{op-tical net-works semi-conduc-tor}

\title{Immersion in $\mathbb{R}^n$ by complex spinors}
\author{Rafael de Freitas Le\~ao\footnote{leao@ime.unicamp.br} and Samuel Augusto Wainer\footnote{samuelwainer@ime.unicamp.br}}

\begin{document}

\maketitle

\noindent \textbf{Keywords:} Immersion, Spinors, Killing Equation 

\vspace{1cm}

\noindent \textbf{Abstract} \\

A beautiful solution to the problem of isometric immersions in $\mathbb{R}^n$ using spinors was found by Bayard, Lawn and Roth \cite{bayard16}. However to use spinors one must assume that the manifold carries a $\Sp$-structure and, especially for complex manifolds where is more natural to consider $\Spc$-structures, this hypothesis is somewhat restrictive. In the present work we show how the above solution can be adapted to $\Spc$-structures.

\section{Introduction}

The problem of isometric immersions of Riemannian manifolds is a classical and widely studied problem in differential geometry. Since 1998, manly because of the work of Thomas Friedrich \cite{friedrich98}, this problem got a new understanding. In \cite{friedrich98}, since Riemannian 2-manifolds are naturally $\Sp$-manifolds, Friedrich showed that isometric immersions of these 2-manifolds are related with spinors that satisfies a Dirac type equation. Such relation can be understood as a spinorial approach of the standard Weierstrass representation.

Since then, a lot of work has been done to further understand this relation and to extend it to more general spaces than Riemannian 2-manifolds. Some remarkable examples of these contribuitions are, for example: in $2004$ Bertrand Morel \cite{morel04} extended Friedrich's spinorial representation of isometric immersions in $\mathbb{R}^{3}$ to $\mathbb{S}^{3}$ and $\mathbb{H}^{3}$; in $2008$ Marie-Amelie Lawn \cite{lawn08}
showed how a given Lorentzian surface $(M^{2},g)$ can be isometrically immersed in the pseudo-Riemannian space $\mathbb{R}^{2,1}$ using spinorial techniques; in $2010$ Lawn and Julien Roth \cite{lawnroth10} exhibit a spinorial characterization of Riemannian surfaces isometrically immersed in the 4-dimensional spaces $\mathbb{M}^4$, $\mathbb{M}^{3}\times \mathbb{R}$ $(\mathbb{M}\simeq (\mathbb{R},\mathbb{S},\mathbb{H})$; using the same spinorial techniques, Lawn and Roth \cite{lawnroth11} in $2011$, presented the spinorial characterization of isometric immersions of arbitrary dimension surfaces in $3$-dimensional space forms, thus generalizing Lawn's work in $\mathbb{R}^{2,1}$; in $2013$ Pierre Bayard \cite{bayard13} proved that an isometric immersion of a Riemannian surface $M^{2}$ in $4$-dimensional Minkowski space $\mathbb{R}^{1,3}$, with a given normal bundle $%
E $ and a given mean curvature vector $\vec{H}\in \Gamma (E)$, is equivalent to the existence of a normalized spinor field $\varphi \in \Gamma (\Sigma M \otimes \Sigma E)$ which is solution of the Dirac equation $D\varphi =\vec{H}\cdot \varphi $ in the surface.

More recently, Bayard, Lawn and Roth \cite{bayard16}, studied spinorial immersions of simply connected $\Sp$-manifolds of arbitrary dimension. The main idea is to use the regular left representation of the Clifford algebra on itself, given by left multiplication, to construct a $\Sp$-Clifford bundle of spinors. In this bundle, using the Clifford algebra structure, is possible to define a vector valued scalar product and, combining this product with a spinor field that satisfies a proper equation, define a vector valued closed 1-form whose integral gives a isometric immersion analogous to the Weierstrass representation of surface.

This work, \cite{bayard16}, provides a beautiful generalization of the previous work relating the Weierstrass representation to spinors. However, mainly when we are considering complex manifolds, the hypothesis of existence of a $\Sp$-structure is somewhat restrictive. Complex manifolds always have a canonical $\Spc$-structure that can be used to construct spinor bundles,  but the existence of a $\Sp$-structure is related to square roots of the canonical bundle and they do not always exist.

The aim of the present work is to show how the ideas of \cite{bayard16} can be generalized to spinor bundles associated to $\Spc$-structure, providing a more natural setting to complex manifolds. Precisely we prove:

\begin{theorem}
	Let $M$ a simply connected $n$-dimensional manifold, $E\rightarrow M$ a
	vector bundle of rank $m$, assume that $TM$ and $E$ are oriented and $Spin^{\mathbb{C}}.$ Suppose that $B:TM\times TM\rightarrow E$ is symmetric and
	bilinear. The following are equivalent:
	
	\begin{enumerate}
		\item There exist a section $\varphi \in \Gamma (N\sum\nolimits^{ad\mathbb{C}})$ such that 
		\begin{equation}
		\nabla _{X}^{\Sigma ^{ad\mathbb{C}}}\varphi =-\frac{1}{2}\sum_{i=1}e_{i}\cdot B(X,e_{i})\cdot \varphi +\frac{1}{2} \textbf{i}~A^{l}(X)\cdot \varphi ,~~\forall
		X\in TM.  
		\end{equation}
		
		\item There exist an isometric immersion $F:M\rightarrow \mathbb{R}^{\left(n+m\right) }$ with normal bundle $E$ and second fundamental form $B$.
	\end{enumerate}
	
	Furthermore, $F=\int \xi $ where $\xi $ is the $\mathbb{R}^{(n+m)}$-valued $%
	1 $-form defined by%
	\begin{equation}
	\xi (X):=\left\langle \left\langle X\cdot \varphi ,\varphi \right\rangle
	\right\rangle ,~~\forall X\in TM.  \label{form}
	\end{equation}
\end{theorem}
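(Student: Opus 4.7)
The proof follows the strategy of Bayard--Lawn--Roth \cite{bayard16}, with modifications to accommodate the auxiliary $U(1)$ connection form $A^{l}$ intrinsic to the Spin$^{\mathbb{C}}$ structure but absent in the Spin case. The core idea, in both directions, is that the Clifford-algebra valued scalar product $\langle\langle\cdot,\cdot\rangle\rangle$ on $\Sigma^{ad\mathbb{C}}$ encodes $\mathbb{R}^{n+m}$-valued geometric data from spinorial data, turning an isometric immersion $F$ into a closed 1-form $\xi$, and conversely.

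\textbf{Direction $(2)\Rightarrow(1)$.} Given an isometric immersion $F:M\to\mathbb{R}^{n+m}$ with normal bundle $E$ and second fundamental form $B$, identify $F^{*}T\mathbb{R}^{n+m}\cong TM\oplus E$. The flat Spin structure of $\mathbb{R}^{n+m}$ admits a parallel Clifford-algebra valued spinor which, restricted along $F$, induces a section $\varphi\in\Gamma(\Sigma^{ad\mathbb{C}})$. A spinorial Gauss formula then relates the ambient flat connection on spinors to the Spin$^{\mathbb{C}}$ connection $\nabla^{\Sigma^{ad\mathbb{C}}}$: the component involving the second fundamental form reproduces exactly $-\tfrac{1}{2}\sum_{i} e_{i}\cdot B(X,e_{i})\cdot\varphi$, while the discrepancy between the trivial auxiliary line bundle on $\mathbb{R}^{n+m}$ and the one carried by the Spin$^{\mathbb{C}}$ structure on $TM\oplus E$ contributes the correction $\tfrac{1}{2}\mathbf{i}\,A^{l}(X)\cdot\varphi$.

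\textbf{Direction $(1)\Rightarrow(2)$.} This is the main content. Given $\varphi$ solving the displayed equation, set $\xi(X):=\langle\langle X\cdot\varphi,\varphi\rangle\rangle$. First I would show $d\xi=0$: expanding $d\xi(X,Y)$ using the compatibility of $\nabla^{\Sigma^{ad\mathbb{C}}}$ with both the Clifford action and with $\langle\langle\cdot,\cdot\rangle\rangle$, then substituting the given expression for $\nabla^{\Sigma^{ad\mathbb{C}}}\varphi$. Antisymmetrizing in $X,Y$, the $B$-contribution vanishes by the symmetry of $B$, and the $A^{l}$-contribution vanishes because $\mathbf{i}\,A^{l}(X)$ is purely imaginary while the scalar product is Hermitian, so the two terms it generates are complex conjugates of opposite sign. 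Since $M$ is simply connected, the Poincar\'e lemma yields $F:M\to\mathbb{R}^{n+m}$ with $dF=\xi$. One then verifies, using algebraic identities of $\langle\langle\cdot,\cdot\rangle\rangle$ and an appropriate normalization of $\varphi$, that $F$ is isometric, that the orthogonal complement of $dF(TM)$ is canonically $E$ (through Clifford multiplication on $\varphi$), and that the second fundamental form of $F$ equals $B$, by differentiating $\xi$ once more and extracting the normal component.

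\textbf{Main obstacle.} The principal novelty with respect to \cite{bayard16} is the precise handling of the term $\tfrac{1}{2}\mathbf{i}\,A^{l}(X)\cdot\varphi$. Choosing the right coefficient so that the auxiliary $U(1)$ connection drops out of $d\xi$ is the critical adjustment, and one must then check that this same term is compatible with the identification of the normal bundle and the second fundamental form of $F$ with $E$ and $B$, respectively. The rest of the argument is a translation into the Spin$^{\mathbb{C}}$ framework of the Clifford-algebra computations of \cite{bayard16}, where the Hermitian nature of $\langle\langle\cdot,\cdot\rangle\rangle$ plays the role previously played by its real-valued Spin counterpart.
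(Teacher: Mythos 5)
Your high-level strategy matches the paper's: for $(2)\Rightarrow(1)$ restrict a constant spinor and apply the spinorial Gauss formula, picking up the $\tfrac12 \mathbf{i}A^l(X)$ correction from the auxiliary $U(1)$ connection; for $(1)\Rightarrow(2)$ show $\xi$ is closed, integrate by Poincar\'e, and verify the immersion preserves metric, second fundamental form and normal connection. However, the stated mechanism for $d\xi=0$ is wrong in a way that would block a careful write-up. You claim that ``the $B$-contribution vanishes by the symmetry of $B$.'' It does not. Expanding $X=\sum_k x^k e_k$, $Y=\sum_k y^k e_k$ in
\begin{equation*}
\sum_j\bigl[Y\cdot e_j\cdot B(X,e_j)-X\cdot e_j\cdot B(Y,e_j)\bigr],
\end{equation*}
the symmetry of $B$ only cancels the diagonal ($k=j$) piece, namely $-B(X,Y)+B(Y,X)$. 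What remains is a genuinely nonzero Clifford-algebra element
\begin{equation*}
\sum_{j}\sum_{k\neq j} e_k\cdot e_j\cdot\bigl[y^k B(X,e_j)-x^k B(Y,e_j)\bigr],
\end{equation*}
a sum of products of three mutually orthogonal vectors. The reason this does not contribute to $d\xi$ is not cancellation but $\tau$-invariance: such products satisfy $\tau[C]=[C]$, and since $\tau\langle\langle\psi,\varphi\rangle\rangle=\langle\langle\varphi,\psi\rangle\rangle$, one gets $(\mathrm{id}-\tau)\langle\langle\varphi,C\cdot\varphi\rangle\rangle=\tau[\varphi](\tau[C]-[C])[\varphi]=0$. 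The $\mathbf{i}A^l$ piece of $C$ is handled by the same $\tau$-invariance argument (for a purely imaginary scalar times a single vector, the sign from $\tau$ on the vector cancels the sign from conjugation), so your intuition about the Hermitian structure is in the right direction there, but the unifying fact you are missing is that the \emph{entire} element $C$ is fixed by $\tau$, and it is $(\mathrm{id}-\tau)$ acting on $\langle\langle\varphi,C\cdot\varphi\rangle\rangle$ that yields zero --- not a term-by-term cancellation from symmetry of $B$.

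A second, more minor imprecision: in $(2)\Rightarrow(1)$ the $\tfrac12\mathbf{i}A^l(X)$ term does not arise from a ``discrepancy'' between auxiliary line bundles. The auxiliary $S^1$ bundle over $\mathbb{R}^{n+m}$ is trivial, but the connection $iA$ chosen on it is arbitrary; a constant spinor $\varphi=[s,[\varphi]]$ is therefore not $\nabla^{\Sigma^{\mathbb C}Q}$-parallel but satisfies $\nabla^{\Sigma^{\mathbb C}Q}_X\varphi=\tfrac12 \mathbf{i}A^l(X)\cdot\varphi$. Feeding this into the Gauss formula (rather than feeding in $\nabla^{\Sigma^{\mathbb C}Q}\varphi=0$) is what produces the extra term in the Killing-type equation; by construction the restricted ambient $S^1$ connection agrees with $iA_1\oplus iA_2$ on the tangent and normal factors, so the two sides use the same $A^l$.

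You would also need the same $\tau$-invariance argument again when verifying that the second fundamental form and normal connection are preserved (the paper's proof of $\xi(B(X,Y))=B_F(\xi(X),\xi(Y))$ and $\xi(\nabla'_X\eta)=\nabla'^F_{\xi(X)}\xi(\eta)$ repeats it), so the gap is not isolated to the closedness step.
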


\section{Adapted Structures}

Let $E \rightarrow M$ be a hermitian vector bundle over $M$. A $\Spc$-structure on $E$ is defined by the following double covering

\begin{displaymath}
\xymatrixcolsep{1pc}\xymatrixrowsep{1pc}\xymatrix{ & Spin_{n}^{\mathbb{C}}
		\ar[rr]^{p^{\mathbb{C}}=\lambda^{\mathbb{C}} \times l^{\mathbb{C}}} \ar@{^{(}->}[dd] & & SO_n \times S^1 \ar@{^{(}->}[dd] \\ \mathbb{Z}_{2}
		\ar[ru] \ar[rd]& & &\\ & P_{Spin_{n}^{\mathbb{C}}}(E) \ar[rr]^{\Lambda^{\mathbb{C}}}
		\ar[rd]_{\pi^{\prime}} & & P_{SO_{n}}(E) \times_M P_{S^1}(E) \ar[ld]^{\pi} \\ & & M & }
\end{displaymath}
where $\Spc$ is the group defined by

\begin{displaymath}
	Spin_n^{\mathbb{C}} = \frac{Spin_n \times S^1}{ \{ (-1,-1) \} }, 
\end{displaymath}
and $S^1=U(1) \in \mathbb{C}$ is understood as the unitary complex numbers. As usual, a $\Spc$-structure can be viewed as a lift of the transition functions of $E$, $g_{ij}$, to the group $\Spc$, $\tilde{g}_{ij}$, but now the transition functions are classes of pairs $\tilde{g}_{ij} = \left[ (h_{ij},z_{ij}) \right]$, where $h_{ij}: U_i \cap U_j \rightarrow Spin_n$ and $z_{ij}: U_i \cap U_j \rightarrow S^1=U(1)$. 

The identity on $\Spc$ is the class $\left\{ (1,1), (-1,-1) \right\}$. Because of this, neither $h_{ij}$ or $z_{ij}$ must satisfy the cocycle condition, only the class of the pair. But, $z_{ij}^2$ satisfies the cocycle condition and defines a complex line bundle $L$, associated with the $P_{S^1}$ principal bundle in the above diagram, called the determinant of the $\Spc$-structure.

The description using transition functions is useful to make clear that $\Spc$-structures are more general than $\Sp$-structures. In fact, given a $\Sp$-structure $P_{\Sp}(E) \rightarrow P_{SO}(E)$ we immediately get a $\Spc$-structure by considering $z_{ij}=1$, in other words, by considering the trivial bundle as the determinant bundle of the structure. On the other hand \cite{hitchin}, a $\Spc$-structure produces a $\Sp$-structure iff the determinant bundle has a square root, that is, the functions $z_{ij}$ satisfies the cocycle condition. 


Another way where $\Spc$-structures are natural is  when we consider an almost complex manifold $(M,g,J)$. In this case the tangent bundle can be viewed as an $U(n)$ bundle, and the natural inclusion $U(n) \xhookrightarrow{} SO(2n)$ produces a canonical $\Spc$-structure on the tangent bundle \cite{friedrich00,nicolaescu}. For this canonical structure the determinant bundle is identified with $\wedge^{0,n} M$ and the spinor bundle constructed using an irreducible complex representation of $\Cl(2n)$ is isomorphic with $\wedge^{0,*}M = \oplus_{k=0}^n \wedge^{0,k}M$. So, various structures on spinors can be described using know structures of $M$.

Unlike the usual case for $\Sp$-structures, a metric connection on $E$ is not enough to produce a connection on $P_{\Spc}(E)$, for this, we also need a connection on the determinant bundle of the structure to get a connection on $P_{SO}(E) \times P_{S^1}(E)$ and be able to lift this connection to $P_{\Spc}(E)$.

To understand the problem of immersions using the Dirac equation in the case of $\Spc$-structures, and spinors associated to this structure, we need to understand adapted $\Spc$-structures on submanifolds. The difference to the standard $\Sp$ case is that we need to keep track of the determinant bundle. Using the ideas of \cite{bar98}, we can describe the adapted structure.

Consider a $\Spc$ $(n+m)$-dimensional manifold $Q$ and a isometrically immersed $n$-dimensional $\Spc$ submanifold $M \xhookrightarrow{} Q$. Let
\begin{displaymath}
	\begin{split}
		&P_{\Spc_{(n+m)}}(Q) \xrightarrow{\Lambda^Q} P_{SO_{(n+m)}}(Q) \times P_{S^1}(Q) \\
		& \left. P_{\Spc_{(n+m)}}(Q) \right|_M \xrightarrow{\Lambda^Q} \left. P_{SO_{(n+m)}}(Q) \bigg|_M \times P_{S^1}(Q) \right. \\
		&P_{\Spc_n}(M) \xrightarrow{\Lambda^M} P_{SO_n}(M) \times P_{S^1}(M)
	\end{split}
\end{displaymath}
be the corresponding $\Spc$-structures. And let the cocycles associated to this structures be, respectively, $\tilde{g}_{\alpha \beta}$, $\left. \tilde{g}_{\alpha \beta}\right|_M  $ and $\tilde{g}_{\alpha \beta}^1$. If we define the functions $\tilde{g}_{\alpha \beta}^2$ by
\begin{displaymath}
	\tilde{g}_{\alpha \beta}^1 \tilde{g}_{\alpha \beta}^2 = \left. \tilde{g}_{\alpha \beta} \right|_M
\end{displaymath}
it is easy to see, using an adapted frame, that the two sets of functions $\tilde{g}_{\alpha \beta}^1$ and $\tilde{g}_{\alpha \beta}^2$ commutes. This implies that $\tilde{g}_{\alpha \beta}^2$ satisfies the cocycle condition, because both $\tilde{g}_{\alpha \beta}$ and $\tilde{g}_{\alpha \beta}^1$ satisfies. The cocycles $\tilde{g}_{\alpha \beta}^2$ are exactly the $\Spc$-structure for the normal bundle $\nu(M)$. With this construction, if $L$, $L_1$ and $L_2$ denotes, respectively, the determinant bundle of the $\Spc$-structure of $Q$, $M$ and $\nu(M)$ we have the relation
\begin{displaymath}
	L = L_1 \otimes L_2
\end{displaymath}


Knowing that $\nu(M)$ has a natural $\Spc$-structure we can use the left regular representation of $\Ccl(n)$ on itself to construct the following $\Spc$-Clifford bundle (this bundles will act as spinor bundles)
\begin{equation}
	\begin{split}
		\Sigma^{\mathbb{C}}Q &:= P_{Spin^{\mathbb{C}}_{(n+m)}(Q)} 
			\times_{\rho_{(n+m)}} \mathbb{C}l_{(n+m)}, \\
		\left. \Sigma^{\mathbb{C}}Q \right|_M &:= \left.  P_{Spin^{\mathbb{C}}_{(n+m)}(Q)} \right|_M
		\times_{\rho_{(n+m)}} \mathbb{C}l_{(n+m)}, \\
		\Sigma^{\mathbb{C}} M &:= P_{Spin^{\mathbb{C}}_n(M)} 
			\times _{\rho_{n}}\mathbb{C}l_{(n)}, \\
		\Sigma^{\mathbb{C}} \nu(M) &:=P_{Spin^{\mathbb{C}}_m \nu(M)} 
		\times _{\rho _{m}} \mathbb{C}l_{(m)}.
	\end{split}
\end{equation}

Using the isomophism $\mathbb{C}l_n \hat{\otimes} \mathbb{C}l_m \simeq \mathbb{C}l_{(n+m)}$ and standard arguments, \cite{bar98}, we get the relation

\begin{equation}
	\Sigma^{\mathbb{C}} Q \mid_M \simeq \Sigma^{\mathbb{C}} M \hat{\otimes} \Sigma^{\mathbb{C}} \nu(M) =: \Sigma^{ad \mathbb{C}}.
\end{equation}

Let $\nabla ^{\Sigma ^{\mathbb{C}}Q},\nabla ^{\Sigma ^{\mathbb{C}}M}$ and $\nabla ^{\Sigma ^{\mathbb{C}}\nu}$ be the connection on $\sum^{\mathbb{C}}Q,\sum^{\mathbb{C}}M$ and $\sum^{\mathbb{C}}\nu(M)$ respectively, induced by the  Levi-Civita connections of $P_{SO_{(n+m)}}(Q)$, $P_{SO_{(n)}}(M)$, and $P_{SO_{(m)}}(\nu)$. We denote the connection on $\sum\nolimits^{ad\mathbb{C}}$ by

\begin{equation}
\nabla ^{\Sigma ^{ad\mathbb{C}}} = \nabla ^{\Sigma ^{\mathbb{C}}M\otimes \Sigma^{\mathbb{C}} \nu}:=\nabla
^{\Sigma ^{\mathbb{C}}M}\otimes Id+Id\otimes \nabla ^{\Sigma ^{\mathbb{C}}\nu}.
\end{equation}

The connections on these bundle are linked by the following Gauss formula:


\begin{equation} \label{gaussformula}
\nabla_{X}^{\Sigma^{\mathbb{C}} Q} \varphi =  \nabla_{X}^{\Sigma^{ad \mathbb{C}} } \varphi + \frac{1}{2} \sum _{i=1}^{n} e_i \cdot B(e_i,X) \cdot \varphi ,
\end{equation}
where $B:TM \times TM \rightarrow \nu(M)$ is the second fundamental form and $\left\{ e_1 \cdots e_n \right\}$ is a local orthonormal frame of $TM$. Here ``$\cdot $'' is the Clifford multiplication on $\Sigma^{\mathbb{C}}Q$.

Note that if we have a parallel spinor $\varphi$ in $\Sigma ^{\mathbb{C}}Q$, for exemple if $Q=\mathbb{R}^{n+m}$, then Eq.(\ref{gaussformula}) implies the following generalized Killing equation
\begin{equation}
    \nabla_{X}^{\Sigma^{ad \mathbb{C}} } \varphi =- \frac{1}{2} \sum _{i=1}^{n} e_i \cdot B(e_i,X) \cdot \varphi .
\end{equation}

\section{Constructing the Immersion}


To construct the immersion we need two steps. First we need to construct a vector valued inner product using the Clifford algebra structure of the $\Spc$-Clifford bundle. This first step does not change when we consider $\Spc$-structures instead of $\Sp$-structures. Therefore we just remember the construction by Bayard, Lawn and Roth \cite{bayard16} in the first subsection.

Second, we need to understand a Gauss type equation on the manifold. For this step the connection on the determinant bundle of the $\Spc$-structures is used and we show how the equations can be reformulated to this case. This is the principal part of the proof and is done on subsection \ref{immersion}.


\subsection{A $\mathbb{C}l_{(n+m)}$-valued inner product}

To make the converse, obtaining an immersion using spinors that satisfies certain equations, we need the following $\Ccl(n+m)$-valued inner product

\begin{eqnarray}
		\tau  :\Ccl_{(n+m)}& \rightarrow &\Ccl_{(n+m)} \\
		\tau (a~e_{i_{1}}e_{i_{2}}\cdots e_{i_{k}}) &:=&(-1)^{k}\bar{a}
			~e_{i_{k}}\cdots e_{i_{2}}e_{i_{1}}, \\
		\tau (\xi) &:=& \overline{\xi}	
	\end{eqnarray}

\begin{equation}
	\begin{split}
		\left\langle \left\langle \cdot ,\cdot \right\rangle \right\rangle:
			\mathbb{C}l_{(n+m)}\times \mathbb{C}l_{(n+m)} &\rightarrow \mathbb{C}l_{(n+m)} \\
		(\xi _{1},\xi _{2}) &\mapsto \left\langle \left\langle \xi _{1}, 
			\xi_{2}\right\rangle \right\rangle =\tau (\xi _{2})\xi _{1}.
	\end{split} \label{product}
\end{equation}

\begin{equation}
	\begin{split}
	\left\langle \left\langle (g\otimes s)\xi _{1},(g\otimes s) 
		\xi_{2}\right\rangle \right\rangle  &=s\overline{s}\tau 
		(\xi _{2})\tau (g)g \xi_{1}=\tau (\xi _{2})\xi _{1} = 
		\left\langle \left\langle \xi _{1}, \xi_{2}\right\rangle 
		\right\rangle , \\
	g\otimes s &\in Spin_{(n+m)}^{\mathbb{C}}\subset \mathbb{C}l_{(n+m)},
	\end{split}
\end{equation}
so the product is well defined on the $\Spc$-Clifford bundles, i.e., Eq.(\ref{product}) induces a $\mathbb{C}l_{(n+m)}$-valued map:
\begin{gather*}
\sum\nolimits^{\mathbb{C}}Q\times \sum\nolimits^{\mathbb{C}}Q\rightarrow 
\mathbb{C}l_{(n+m)} \\
(\varphi _{1},\varphi _{2})=([p,[\varphi _{1}]],[p,[\varphi _{2}]])\mapsto
\left\langle \left\langle \lbrack \varphi _{1}],[\varphi _{2}]\right\rangle
\right\rangle =\tau ([\varphi _{2}])[\varphi _{1}],
\end{gather*}
where $[\varphi _{1}]$, $[\varphi _{2}]$ are the representative of $\varphi_{1},\varphi _{2}$ in the $Spin^{\mathbb{C}}(n+m)$ frame $p\in P_{Spin^{\mathbb{C}}(n+m)}.$

\begin{lemma}
	The connection $\nabla ^{\Sigma ^{\mathbb{C}}Q}$ is compatible with the	product $\left\langle \left\langle \cdot ,\cdot \right\rangle \right\rangle .$
\end{lemma}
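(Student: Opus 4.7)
The goal is to verify the Leibniz identity
\[
X\langle\langle \varphi_1,\varphi_2\rangle\rangle
= \langle\langle \nabla^{\Sigma^{\mathbb{C}}Q}_X\varphi_1,\varphi_2\rangle\rangle
+\langle\langle \varphi_1,\nabla^{\Sigma^{\mathbb{C}}Q}_X\varphi_2\rangle\rangle
\]
for all $X\in TQ$ and all sections $\varphi_1,\varphi_2\in\Gamma(\Sigma^{\mathbb{C}}Q)$. The plan is to pass to a local $\Spc$-frame, write the connection as $\nabla^{\Sigma^{\mathbb{C}}Q}_X\varphi = X[\varphi] + \omega(X)\cdot[\varphi]$ with $\omega(X)\in\mathfrak{spin}^{\mathbb{C}}_{(n+m)}=\mathfrak{spin}_{(n+m)}\oplus i\mathbb{R}\subset \mathbb{C}l_{(n+m)}$, and then reduce the identity to an algebraic property of $\tau$ applied to $\omega(X)$.

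First I would record the key algebraic fact that $\tau$, as defined on $\mathbb{C}l_{(n+m)}$, is a conjugate-linear anti-automorphism: $\tau(\xi_1\xi_2)=\tau(\xi_2)\tau(\xi_1)$. Using this,
\[
X\bigl(\tau([\varphi_2])[\varphi_1]\bigr)
=\tau(X[\varphi_2])[\varphi_1]+\tau([\varphi_2])\,X[\varphi_1],
\]
while on the other hand
\[
\langle\langle\nabla_X\varphi_1,\varphi_2\rangle\rangle
+\langle\langle\varphi_1,\nabla_X\varphi_2\rangle\rangle
=\tau([\varphi_2])X[\varphi_1]+\tau(X[\varphi_2])[\varphi_1]
+\tau([\varphi_2])\,\omega(X)[\varphi_1]
+\tau(\omega(X)[\varphi_2])[\varphi_1].
\]
Subtracting, compatibility collapses to the requirement
\[
\tau([\varphi_2])\,\omega(X)[\varphi_1]
+\tau([\varphi_2])\,\tau(\omega(X))[\varphi_1]=0,
\]
i.e.\ to $\tau(\omega(X))=-\omega(X)$.

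Next I would check this on the two summands of $\mathfrak{spin}^{\mathbb{C}}_{(n+m)}$ separately. For the $\mathfrak{spin}$-part, $\omega(X)$ is a real linear combination of the generators $e_ie_j$ with $i\neq j$, and from the definition $\tau(e_ie_j)=(-1)^2\cdot 1\cdot e_je_i = e_je_i = -e_ie_j$. For the $i\mathbb{R}$-part coming from the connection on the determinant bundle $L$, $\omega(X)$ is a pure imaginary scalar $it$ and $\tau(it)=\overline{it}=-it$. Thus $\tau(\omega(X))=-\omega(X)$ on all of $\mathfrak{spin}^{\mathbb{C}}_{(n+m)}$, which proves the identity.

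The only delicate point is bookkeeping the extra $i\mathbb{R}$-contribution from the auxiliary $U(1)$-connection that distinguishes the $\Spc$-case from the $\Sp$-case: in Bayard--Lawn--Roth one only had to invoke that $\mathfrak{spin}_{(n+m)}\subset\mathrm{Skew}_\tau$, whereas here one must observe in addition that $\tau$ is conjugate-linear on scalars so that imaginary scalars also lie in the $(-1)$-eigenspace of $\tau$. Once that is in place the proof is a direct Leibniz computation and requires no global arguments.
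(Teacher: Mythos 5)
Your proof is correct and follows essentially the same route as the paper's: fix a local $\Spc$-frame, write the spinorial covariant derivative locally as $X + \omega(X)$ with $\omega(X) \in \mathfrak{spin}_{(n+m)} \oplus i\mathbb{R} \subset \mathbb{C}l_{(n+m)}$, and reduce the Leibniz rule to the $\tau$-skewness of $\omega(X)$, checked on the $e_ie_j$ generators by reversal and on the $i\mathbb{R}$ part by conjugate-linearity of $\tau$. The paper carries out this verification inline in the local expansion (with $w_{ij}(X)$ and $iA^l(X)$) rather than isolating the condition $\tau(\omega(X))=-\omega(X)$ as a separate step, but the computation and the key algebraic facts are identical.
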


\begin{proof}
	Fix $s=(e_{1},...,e_{(n+m)}):U\subset M\subset Q\rightarrow P_{SO(n+m)}$ a local section of the frame bundle, $l:U\subset M\subset Q\rightarrow P_{s^{1}}$ a local section of the associated $S^1$-principal bundle, $w^{Q}:T(P_{SO(n+m)})\rightarrow so(n+m)$ is the Levi-Civita connection of $P_{SO(n+m)}$ and $iA:TP_{S^{1}}\rightarrow i\mathbb{R}$ is an arbitrary connection on $P_{S^{1}}$, denote by $w^{Q}(ds(X))=(w_{ij}(X))\in so(n+m),$ $iA(dl(X))=iA^{l}(X).$
	
	If $\psi =[p,[\psi ]]\ $and $\psi ^{\prime }=[p,[\psi ^{\prime }]]$ are	sections of $\sum\nolimits^{\mathbb{C}}Q$ we have:
	\begin{eqnarray*}
		\nabla _{X}^{\Sigma ^{\mathbb{C}}Q}\psi &=&\left[ p,X([\psi ])+\frac{1}{2} \sum\nolimits_{i<j}w_{ij}(X)e_{i}e_{j}\cdot \lbrack \psi ]+\frac{1}{2} iA^{l}(X)[\psi ]\right] , \\
		\left\langle \left\langle \nabla _{X}^{\Sigma ^{\mathbb{C}}Q}\psi ,\psi^{\prime }\right\rangle \right\rangle &=&\overline{[\psi ^{\prime }]}\left( X([\psi ])+\frac{1}{2}\sum\nolimits_{i<j}w_{ij}e_{i}e_{j}\cdot \lbrack \psi]+\frac{1}{2}iA^{l}(X)[\psi ]\right) , \\
		\left\langle \left\langle \psi ,\nabla _{X}^{\Sigma ^{\mathbb{C}}Q}\psi' \right\rangle \right\rangle &=&\overline{\left( X([\psi'])+\frac{1}{2}\sum\nolimits_{i<j}w_{ij}e_{i}e_{j}[\psi ']+\frac{1}{2}A^{l}[\psi '] \right) }[\psi ] \\
		&=&\left( X(\overline{[\psi ']})+\frac{1}{2}\sum \nolimits_{i<j}w_{ij}\overline{e_{i}e_{j}[\psi ']}+\frac{1}{2}	\overline{A^{l}}\overline{[\psi ']}\right) [\psi ] \\
		&=&\left( X(\overline{[\psi ']})-\frac{1}{2}\sum \nolimits_{i<j}w_{ij}\overline{[\psi ']}e_{i}e_{j}-\frac{1}{2}A^{l} \overline{[\psi ']}\right) [\psi ],
	\end{eqnarray*}
	then
	\begin{eqnarray*}
		\left\langle \left\langle \nabla _{X}^{\Sigma ^{\mathbb{C}}Q}\psi ,\psi^{\prime }\right\rangle \right\rangle +\left\langle \left\langle \psi,\nabla _{X}^{\Sigma ^{\mathbb{C}}Q}\psi ^{\prime }\right\rangle\right\rangle &=&\overline{[\psi ^{\prime }]}X(\xi )+X(\overline{[\psi^{\prime }]})[\psi ], \\
		X\left\langle \left\langle \psi ,\psi ^{\prime }\right\rangle \right\rangle&=&X\left( \overline{\xi ^{\prime }}\xi \right) =X(\overline{\xi ^{\prime }})\xi +\overline{\xi ^{\prime }}X(\xi ).
	\end{eqnarray*}
\end{proof}

\begin{lemma}
	The map $\left\langle \left\langle \cdot ,\cdot \right\rangle \right\rangle:\sum\nolimits^{\mathbb{C}}Q\times \sum\nolimits^{\mathbb{C}}Q\rightarrow \mathbb{C}l_{(n+m)}$ satisfies:
	
	\begin{enumerate}
		\item $\left\langle \left\langle X\cdot \psi ,\varphi \right\rangle
		\right\rangle =-\left\langle \left\langle \psi ,X\cdot \varphi \right\rangle
		\right\rangle ,~\psi ,\varphi \in \sum\nolimits^{\mathbb{C}}Q,~X\in TQ.$
		
		\item $\tau \left\langle \left\langle \psi ,\varphi \right\rangle
		\right\rangle =\left\langle \left\langle \varphi ,\psi \right\rangle
		\right\rangle ,~\psi ,\varphi \in \sum\nolimits^{\mathbb{C}}Q$
	\end{enumerate}
	
	\begin{proof}This is an easy calculation:
		\begin{enumerate} 
		
			\item $\left\langle \left\langle X\cdot \psi ,\varphi \right\rangle
			\right\rangle =\tau \lbrack \varphi ][X\cdot \psi ]=\tau \lbrack \varphi
			][X][\psi ]=-\tau \lbrack \varphi ]\tau \lbrack X][\psi ]=\left\langle
			\left\langle \psi ,X\cdot \varphi \right\rangle \right\rangle $
			
			\item $\tau \left\langle \left\langle \psi ,\varphi \right\rangle
			\right\rangle =\tau (\tau \lbrack \varphi ][\psi ])=\tau \lbrack \psi
			][\varphi ]=\left\langle \left\langle \varphi ,\psi \right\rangle
			\right\rangle .$
		\end{enumerate}
	\end{proof}
\end{lemma}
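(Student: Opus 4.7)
The plan is to reduce both identities to three elementary algebraic properties of the map $\tau:\mathbb{C}l_{(n+m)}\to\mathbb{C}l_{(n+m)}$ defined just above the lemma: (i) $\tau$ is an anti-homomorphism, $\tau(\xi\eta) = \tau(\eta)\tau(\xi)$; (ii) $\tau$ is an involution, $\tau^{2} = \mathrm{id}$; and (iii) $\tau(X) = -X$ for every real tangent vector $X\in TQ$, regarded as a degree-one element of the Clifford algebra. Property (iii) is immediate from the defining formula, since $\tau(e_{i}) = (-1)^{1}\bar{1}\,e_{i} = -e_{i}$, and this extends by real-linearity to all real vectors. Property (ii) is also immediate on basis monomials: reversing twice restores the original order, $((-1)^{k})^{2}=1$, and $\overline{\bar{a}}=a$; (conjugate-)linear extension then gives $\tau^{2}=\mathrm{id}$ on the whole algebra.

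For (i), I would first check the identity on two Clifford monomials $\xi = e_{i_{1}}\cdots e_{i_{k}}$, $\eta = e_{j_{1}}\cdots e_{j_{l}}$, where a direct expansion gives
\[
\tau(\xi\eta) = (-1)^{k+l}\, e_{j_{l}}\cdots e_{j_{1}}\, e_{i_{k}}\cdots e_{i_{1}} = \tau(\eta)\tau(\xi),
\]
and then extend by linearity in $\xi$ together with conjugate-linearity on the scalar coefficients to all of $\mathbb{C}l_{(n+m)}$. Equivalently, $\tau$ is the composition of the classical reversion anti-involution with the coefficient-wise complex conjugation, both of which are well-known (anti-)involutions on the complex Clifford algebra.

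Once (i)--(iii) are in place, both conclusions drop out in a single line each. For item 1,
\[
-\langle\langle \psi, X\cdot \varphi\rangle\rangle = -\tau(X\varphi)\,\psi = -\tau(\varphi)\tau(X)\,\psi = -\tau(\varphi)(-X)\psi = \tau(\varphi)\,X\psi = \langle\langle X\cdot\psi, \varphi\rangle\rangle.
\]
For item 2,
\[
\tau\langle\langle \psi,\varphi\rangle\rangle = \tau\bigl(\tau(\varphi)\psi\bigr) = \tau(\psi)\,\tau\bigl(\tau(\varphi)\bigr) = \tau(\psi)\,\varphi = \langle\langle \varphi,\psi\rangle\rangle.
\]
Although the identities are stated for sections of $\Sigma^{\mathbb{C}}Q$, these computations take place in a local $\Spc$-frame; well-definedness on the associated bundle was already verified immediately before the lemma.

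There is no genuine obstacle here: the only point of substance is the anti-homomorphism property of $\tau$, which is standard. I would, however, emphasize in the write-up that the crucial minus sign in (1) is produced precisely by the factor $(-1)^{k}$ built into the definition of $\tau$ (equivalently, by $\tau(X)=-X$ on vectors); if this sign were dropped, the skew-symmetry in the Clifford action would fail, and with it the subsequent construction of the closed $1$-form $\xi$ and its integration to an immersion.
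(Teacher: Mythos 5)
Your proposal is correct and follows essentially the same route as the paper: both reduce the two identities to the facts that $\tau$ is a (conjugate-linear) anti-involution with $\tau(X)=-X$ on vectors, and then compute in a local $\Spc$-frame. You merely make the preliminary properties of $\tau$ and the well-definedness on the associated bundle more explicit, and your sign bookkeeping in item 1 is in fact cleaner than the paper's displayed chain, whose final equality omits the minus sign that the statement requires.
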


Note the same idea, product and properties are valid for the bundles $\sum^{%
	\mathbb{C}}Q,$ $\sum^{\mathbb{C}}M$, $\sum^{\mathbb{C}} \nu(M)$, $\sum\nolimits^{%
	\mathbb{C}}M\hat{\otimes}\sum\nolimits^{\mathbb{C}} \nu(M).$

\subsection{Spinorial Representation of Submanifolds in $\mathbb{R}^{n+m}$} \label{immersion}

Let $M$ a $n$-dimensional manifold, $E\rightarrow M$ a real vector bundle of rank $m$, assume that $TM$ and $N$ are oriented and $Spin^{\mathbb{C}}.$ Denote by $P_{SO_{n}}(M)$ the frame bundle of $TM$ and by $P_{SO_{m}}(E)$ the frame bundle of $E.$ The respective $Spin^{\mathbb{C}}$ structures are defined as%
\begin{eqnarray*}
	\Lambda ^{1\mathbb{C}} &:&P_{Spin_{n}^{\mathbb{C}}}(M)\rightarrow
	P_{SO_n}(M)\times P_{S^{1}}(M), \\
	\Lambda ^{2\mathbb{C}} &:&P_{Spin_{m}^{\mathbb{C}}}(E)\rightarrow
	P_{SO_m}(E)\times P_{S^{1}}(E).
\end{eqnarray*}%

We can define the bundle $P_{S^{1}}$ as the one with transition functions defined by product of transition functions of $P_{S^{1}}(M)$ and $P_{S^{1}}(E)$. It is not diffiult to see that there is a canonical bundle morphism: $\Phi :P_{S^{1}}(M)\times _{M}P_{S^{1}}(E)\rightarrow P_{S^{1}}$ such that, in any local trivialization, the following diagram comute:
\begin{center}
	\begin{equation*}
	\xymatrixcolsep{1pc}\xymatrixrowsep{1pc}\xymatrix{
		P_{S^{1}}(M)\times _{M}P_{S^{1}}(E) \ar[rr]^{~~~~~~~~~~ \Phi} \ar[dd] & &  P_{S^{1}}  \ar[dd] \\	& \\
		U_{\alpha} \times S^1 \times S^1 \ar[rr]^{~~~~~\phi_\alpha} & & U_{\alpha} \times S^1	}
	\end{equation*}
\end{center}
where $\phi_\alpha(x,r,s)=(x,rs), x\in U_{\alpha}, r,s \in S^1.$

Fix the following notation
\begin{eqnarray*}
	\sum\nolimits^{ad\mathbb{C}} &:&=\sum\nolimits^{\mathbb{C}}M\otimes
	\sum\nolimits^{\mathbb{C}}E\simeq \left( P_{Spin^{\mathbb{C}}(n)}\times
	_{M}P_{Spin^{\mathbb{C}}(m)}\right) \times \mathbb{C}l_{(n+m)}, \\
	N\sum\nolimits^{ad\mathbb{C}} &:&=\left( P_{Spin^{\mathbb{C}}(n)}\times
	_{M}P_{Spin^{\mathbb{C}}(m)}\right) \times Spin_{(n+m)}^{\mathbb{C}}.
\end{eqnarray*}

Here $iA^{1}:TP_{S^{1}}(M)\rightarrow i\mathbb{R}$, $iA^{2}:TP_{S^{1}}(E)\rightarrow i\mathbb{R}$ are arbitrary connections in $P_{S^{1}}(M)$ and $P_{S^{1}}(E)$. Denote a local section by $s=(e_{1},\cdots,e_{n}):U\rightarrow P_{SO_{n}}(M)$, $l_{1}:U\rightarrow P_{S^{1}}(M)$, $l_{2}:U\rightarrow P_{S^{1}}(E),$ $l=\Phi (l_{1},l_{2}):U\rightarrow P_{S^{1}}$. Now $iA:TP_{S^{1}}\rightarrow i\mathbb{R}$ is the connection
defined by $iA(d\Phi (l_{1},l_{2}))=iA_{1}(dl_{1})+iA_{2}(dl_{2}).$
Established this notation we have the following:

\begin{theorem}
	Let $M$ a simply connected $n$-dimensional manifold, $E\rightarrow M$ a
	vector bundle of rank $m$, assume that $TM$ and $E$ are oriented and $Spin^{%
		\mathbb{C}}.$ Suppose that $B:TM\times TM\rightarrow E$ is symmetric and
	bilinear. The following are equivalent:
	
	\begin{enumerate}
		\item There exist a section $\varphi \in \Gamma (N\sum\nolimits^{ad\mathbb{C%
		}})$ such that 
		\begin{equation}
		\nabla _{X}^{\Sigma ^{ad\mathbb{C}}}\varphi =-\frac{1}{2}\sum_{i=1}e_{i}%
		\cdot B(X,e_{i})\cdot \varphi +\frac{1}{2}i~A^{l}(X)\cdot \varphi ,~~\forall
		X\in TM.  \label{killing1}
		\end{equation}
		
		\item There exist an isometric immersion $F:M\rightarrow \mathbb{R}^{\left(
			n+m\right) }$ with normal bundle $E$ and second fundamental form $B$.
	\end{enumerate}
	
	Furthermore, $F=\int \xi $ where $\xi $ is the $\mathbb{R}^{(n+m)}$-valued $%
	1 $-form defined by%
	\begin{equation}
	\xi (X):=\left\langle \left\langle X\cdot \varphi ,\varphi \right\rangle
	\right\rangle ,~~\forall X\in TM.  \label{form}
	\end{equation}
\end{theorem}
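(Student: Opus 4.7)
The strategy is the one of Bayard--Lawn--Roth adapted to the $\Spc$ setting: produce from $\varphi$ a closed $\mathbb{R}^{n+m}$-valued $1$-form $\xi$ and integrate it. The only essentially new feature is the presence of the connection $iA^{l}$ on the determinant line of the $\Spc$-structure, so the key verification specific to this paper is that the $\tfrac{1}{2}iA^{l}$ correction in \eqref{killing1} does not interfere with closedness.

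For $(2)\Rightarrow(1)$, I would take a parallel spinor $\varphi_{0}$ on the flat manifold $\mathbb{R}^{n+m}$ (which exists because its canonical $\Spc$-structure has trivial determinant bundle equipped with the trivial connection) and restrict it via $F$ to obtain $\varphi\in\Gamma(\Sigma^{\mathbb{C}}Q|_{M})\simeq\Gamma(\Sigma^{ad\mathbb{C}})$; since $\varphi_{0}$ can be chosen in $\Spc_{n+m}\subset\Ccl_{n+m}$, its restriction is a section of $N\Sigma^{ad\mathbb{C}}$. The Gauss formula \eqref{gaussformula} then produces the $B$-term of the Killing equation, and the $\tfrac{1}{2}iA^{l}$ term appears as the discrepancy between the (trivial) restricted connection on $L=L_{1}\otimes L_{2}$ and the arbitrary product connection $A^{l}=A^{1}+A^{2}$ we placed on $M$ and $E$.

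The direction $(1)\Rightarrow(2)$ is the substantive one; I would carry it out in four steps.
\begin{enumerate}
\item[(i)] $\xi(X)\in\mathbb{R}^{n+m}\subset\Ccl_{n+m}$: writing locally $\varphi=[p,gs]$ with $gs\in\Spc_{n+m}$, one has $\xi(X)=\tau(gs)\,X\,(gs)=\tau(g)\,X\,g$ after cancelling $|s|^{2}=1$, and $\tau(g)Xg$ is the twisted adjoint action of $\Sp_{n+m}$ on the vector $X$.
\item[(ii)] $d\xi=0$: using the lemma's compatibility of $\nabla^{\Sigma^{ad\mathbb{C}}}$ with $\langle\langle\cdot,\cdot\rangle\rangle$ and the Levi-Civita connection on $M$, one gets $(\nabla_{X}\xi)(Y)=\langle\langle Y\cdot\nabla_{X}\varphi,\varphi\rangle\rangle+\langle\langle Y\cdot\varphi,\nabla_{X}\varphi\rangle\rangle$. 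Substituting \eqref{killing1}, the two $\tfrac{1}{2}iA^{l}(X)\varphi$ contributions appear as $+\tfrac{1}{2}iA^{l}(X)\xi(Y)$ and $-\tfrac{1}{2}iA^{l}(X)\xi(Y)$ because $A^{l}(X)$ is real and $\tau$ conjugates scalars, so they cancel. For the $B$-piece, the anticommutation of tangent and normal vectors in $\Ccl_{n+m}$ together with $e_{j}Y+Ye_{j}=-2\langle Y,e_{j}\rangle$ collapses the sum to $\tau(\varphi)\,B(X,Y)\,\varphi$; hence $d\xi(X,Y)=\tau(\varphi)[B(X,Y)-B(Y,X)]\varphi=0$ by the symmetry of $B$.
\item[(iii)] Simple connectivity of $M$ yields $\xi=dF$ with $F:M\to\mathbb{R}^{n+m}$; the identity $\tau(\varphi)\varphi=1$ on $\Spc_{n+m}$ gives $\xi(X)^{2}=\tau(\varphi)X^{2}\varphi=-|X|^{2}$, so $|dF(X)|=|X|$ and $F$ is an isometric immersion.
\item[(iv)] Extending the formula by $\tilde\xi(v):=\langle\langle v\cdot\varphi,\varphi\rangle\rangle$ for $v\in E$, the same anticommutation argument yields $\langle \xi(X),\tilde\xi(v)\rangle=0$, identifying $\tilde\xi(E)$ with the normal bundle of $F$; comparing the Killing equation with the Gauss formula along $F$ then recovers $B$ as its second fundamental form.
\end{enumerate}

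The heart of the argument is step (ii): the novelty over the $\Sp$ case is precisely the cancellation of the $iA^{l}$ term, which is a purely algebraic reality argument; once this is in place, the remaining $B$-computation is the same antisymmetrization already carried out in \cite{bayard16}.
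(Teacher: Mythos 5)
Your proposal is correct and follows essentially the same route as the paper: restrict a constant $\Spc$ spinor and apply the Gauss formula for $(2)\Rightarrow(1)$, then show $\xi$ is $\mathbb{R}^{n+m}$-valued and closed via the $(\mathrm{id}-\tau)$ mechanism, with the $iA^{l}$ contributions cancelling exactly as you describe. The only item the paper proves that your step (iv) does not explicitly address is that the normal connection of $F$ coincides with the given connection on $E$ (the paper's final lemma, part~(ii)); this requires a computation parallel to the one for $B$, again using the compatibility of $\nabla^{\Sigma^{ad\mathbb{C}}}$ with $\langle\langle\cdot,\cdot\rangle\rangle$ and the fact that the $\eta\cdot\nabla_{X}\varphi$ terms land in $TF(M)$ after taking the normal projection.
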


\begin{proof}
	$2)\Rightarrow 1)$ Since $\mathbb{R}^{n+m}$ is contratible there exists a
	global section $s:\mathbb{R}^{n+m}\rightarrow P_{Spin^{\mathbb{C}}(n+m)}$,
	with a corresponding parallel orthonormal basis $h=(E_{1},\cdots ,E_{n+m}):	\mathbb{R}^{n+m}\rightarrow P_{SO(n+m)},$ and $l:\mathbb{R}^{n+m}\rightarrow P_{S^{1}},$ $\Lambda^{\mathbb{R}^{n+m}}(s)=(h,l).$ Fix a constant $[\varphi ]\in Spin^{\mathbb{C}}(n+m)\subset \mathbb{C}l_{(n+m)}$ and define
	the spinor field $\varphi =[s,[\varphi ]]\in \sum\nolimits^{\mathbb{C}}
	\mathbb{R}^{n+m}:=P_{Spin^{\mathbb{C}}(n+m)}\times \mathbb{C}l_{(n+m)},$
	again denote $w^{Q}(dh(X))=(w_{ij}^{h}(X))\in so(n+m),$ $iA(dl(X))=iA^{l}(X)\in i\mathbb{R},$
	\begin{eqnarray}
	\nabla _{X}^{\Sigma ^{\mathbb{C}}Q}\varphi &=&\left[ s,X([\varphi ])+\left\{ 
	\frac{1}{2}\sum\nolimits_{i<j}w_{ij}^{h}(X)E_{i}E_{j}+\frac{1}{2}%
	i~A^{l}(X)\right\} \cdot \lbrack \varphi ]\right] \notag
	\\
	&=&\left[ s,\frac{1}{2}i~A^{l}(X)\cdot \lbrack \varphi ]\right]. \notag \\
    &=&\frac{1}{2}i~A^{l}(X)\cdot \varphi	\label{parallel} 
	\end{eqnarray}

	Finally, restricting $\varphi$ to $\Sigma^{ad\mathbb{C}}$ and applying the gauss formula Eq.(\ref{gaussformula})%
	\begin{eqnarray}
	\nabla _{X}^{\Sigma ^{\mathbb{C}}Q}\varphi -\nabla _{X}^{\Sigma ^{ad\mathbb{C%
	}}}\varphi &=&\frac{1}{2}\sum_{i=1}e_{i}\cdot B(X,e_{i})\cdot \varphi  \notag
	\\
	\frac{1}{2}i~A^{l}(X)\cdot \varphi -\nabla _{X}^{\Sigma ^{ad\mathbb{C}%
	}}\varphi &=&\frac{1}{2}\sum_{i=1}e_{i}\cdot B(X,e_{i})\cdot \varphi  \notag
	\\
	\nabla _{X}^{\Sigma ^{ad\mathbb{C}}}\varphi &=&-\frac{1}{2}%
	\sum_{i=1}e_{i}\cdot B(X,e_{i})\cdot \varphi +\frac{1}{2}i~A^{l}(X)\cdot
	\varphi .  \label{killing}
	\end{eqnarray}

	\bigskip
	
	$1)\Rightarrow 2)$ The ideia here is to prove that the $1$-form $\xi $ Eq.(%
	\ref{form}) gives us an immersion preserving the metric, the second
	fundamental form and the normal connection. For this purpose, we will
	present the following lemmas:
	
	\begin{lemma}
		Suppose that $\varphi \in \Gamma (N\sum\nolimits^{ad\mathbb{C}})$ satisfies
		Eq.(\ref{killing1}) and define $\xi $ by Eq.(\ref{form}), then
		
		\begin{enumerate}
			\item $\xi $ is $\mathbb{R}^{(n+m)}$-valued $1$-form.
			
			\item $\xi $ is a closed $1$-form, $d\xi =0$
		\end{enumerate}
		
		\begin{proof}
			\begin{enumerate}
				\item If $\varphi =[p,[\varphi ]],X=[p,[X]],$ where $[\varphi ]$ and $[X]$
				represent $\varphi $ and $X$ in a given frame $\tilde{s}\in P_{Spin^{\mathbb{%
							C}}(n)}\times P_{Spin^{\mathbb{C}}(m)},$%
				\begin{equation*}
				\xi (X):=\tau \lbrack \varphi ][X][\varphi ]\in \mathbb{R}^{n}\subset
				Cl_{n}\subset \mathbb{C}l_{n},\text{ because }[\varphi ]\in Spin^{\mathbb{C}%
				}.
				\end{equation*}
				
				\item Supouse that in the point $x_{0}\in M$ $\nabla ^{M}X=\nabla ^{M}Y=0,$
				to simplify write $\nabla _{X}^{\Sigma ^{ad\mathbb{C}}}\varphi =\nabla
				_{X}\varphi $ and $\nabla ^{M}X=\nabla X$,%
				\begin{eqnarray*}
					X(\xi (Y)) &=&\left\langle \left\langle Y\cdot \nabla _{X}\varphi ,\varphi
					\right\rangle \right\rangle +\left\langle \left\langle Y\cdot \varphi
					,\nabla _{X}\varphi \right\rangle \right\rangle =(id-\tau )\left\langle
					\left\langle Y\cdot \varphi ,\nabla _{X}\varphi \right\rangle \right\rangle
					\\
					&=&(id-\tau )\left\langle \left\langle \varphi ,\frac{1}{2}%
					\sum\nolimits_{j=1}^{m}Y\cdot e_{j}\cdot B(X,e_{j})\cdot \varphi -\frac{1}{2%
					}A^{l}(X)iY\cdot \varphi \right\rangle \right\rangle , \\
					Y(\xi (X)) &=&(id-\tau )\left\langle \left\langle \varphi ,\frac{1}{2}%
					\sum\nolimits_{j=1}^{m}X\cdot e_{j}\cdot B(Y,e_{j})\cdot \varphi -\frac{1}{2%
					}A^{l}(Y)iX\cdot \varphi \right\rangle \right\rangle ,
				\end{eqnarray*}%
				from now on
				\begin{eqnarray}
				d\xi (X,Y) &=&X(\xi (Y))-Y(\xi (X)) \notag \\
				&=&(id-\tau )\left\langle \left\langle \varphi ,\frac{1}{2}%
				\sum\nolimits_{j=1}^{m}\left[ Y\cdot e_{j}\cdot B(X,e_{j})-X\cdot
				e_{j}\cdot B(Y,e_{j})\right] \cdot \varphi \right. \right. \notag \\ 
				&&\left. \left. +\frac{1}{2}i\left(
				A^{l}(Y)X-A^{l}(X)Y\right) \cdot \varphi \right\rangle \right\rangle \notag \\
				&=&(id-\tau )\left\langle \left\langle \varphi ,C\cdot \varphi \right\rangle
				\right\rangle,
				\end{eqnarray}
with $C=\frac{1}{2}\sum_{j=1}^{m}\left[ Y\cdot e_{j}\cdot B(X,e_{j})-X\cdot
				e_{j}\cdot B(Y,e_{j})\right] +\frac{1}{2}A^{l}(Y)iX-\frac{1}{2}A^{l}(X)iY.$
				Write $X=\sum_{k=1}^{m}x^{k}e_{k};~Y=\sum_{k=1}^{m}y^{k}e_{k}$ then
				\begin{eqnarray*}
				\sum\nolimits_{k=1}^{m}X\cdot e_{k}\cdot B(Y,e_{k})
				&=&\sum\nolimits_{j=1}^{m}\sum\nolimits_{k=1}^{m}x^{k}e_{k}\cdot
				e_{j}\cdot B(Y,e_{j}) \notag \\ &=&-B(Y,X)+\sum\nolimits_{j=1}^{m}\sum\nolimits 
				_{\substack{ k=1  \\ k\neq j}}^{m}x^{k}e_{k}\cdot e_{j}\cdot B(Y,e_{j}) \\
				\sum\nolimits_{l=1}^{m}Y\cdot e_{k}\cdot B(X,e_{k})
				&=&\sum\nolimits_{j=1}^{m}\sum\nolimits_{k=1}^{m}y^{k}e_{k}\cdot
				e_{j}\cdot B(X,e_{j}) \notag \\
				&=&-B(X,Y)+\sum\nolimits_{j=1}^{m}\sum\nolimits 
				_{\substack{ k=1  \\ k\neq j}}^{m}y^{k}e_{k}\cdot e_{j}\cdot B(X,e_{j}) 
				\end{eqnarray*}
				from what
				\begin{eqnarray}
				C &=&\frac{1}{2}\left[ \sum\nolimits_{j=1}^{m}\sum\nolimits_{\substack{ 
						k=1  \\ k\neq j}}^{m}e_{k}\cdot e_{j}\cdot \left[
				y^{k}B(X,e_{j})-x^{k}B(Y,e_{j})\right] \right. \notag \\ && \left. +i(A^{l}(Y)X-A^{l}(X)Y)\right] \notag \\
				\tau ([C]) &=&-\frac{1}{2}\left[ \sum\nolimits_{j=1}^{m}\sum\nolimits 
				_{\substack{ k=1  \\ k\neq j}}^{m}\left[ y^{k}B(X,e_{j})-x^{k}B(Y,e_{j})	\right] \right] \cdot e_{j}\cdot e_{k}  \notag \\ && + \frac{i}{2}(A^{l}(Y)\left[ X\right]
				-A^{l}(X)\left[ Y\right] ) \notag \\
				&=&\frac{1}{2}\left[ \sum\nolimits_{j=1}^{m}\sum\nolimits_{\substack{ k=1 
						\\ k\neq j}}^{m}e_{k}\cdot e_{j}\cdot \left[ y^{k}B(X,e_{j})-x^{k}B(Y,e_{j})%
				\right] \right] \notag \\ 
				&& + \frac{i}{2}(A^{l}(Y)\left[ X\right] -A^{l}(X)\left[ Y\right]
				)=[C].
				\end{eqnarray}
				Which implies that%
				\begin{equation*}
				d\xi (X,Y)=(id-\tau )\left\langle \left\langle \varphi ,C\cdot \varphi
				\right\rangle \right\rangle =(id-\tau )(\tau \lbrack \varphi ]\tau \lbrack
				C][\varphi ])=0.
				\end{equation*}
			\end{enumerate}
		\end{proof}
	\end{lemma}
	
	From the fact that $M$ is simply connected and $\xi $ is closed, from the
	Poincar\'{e}'s Lemma we know that there exists a%
	\begin{equation*}
	F:M\rightarrow \mathbb{R}^{\left( n+m\right) }
	\end{equation*}%
	such that $dF=\xi .$ The next lemma allows us to conclude the proof of the
	theorem.
	
	\begin{lemma}
		\begin{enumerate}
			\item The map $F:M\rightarrow \mathbb{R}^{n},$ is an isometry.
			
			\item The map%
			\begin{eqnarray*}
				\Phi _{E} &:&E\rightarrow M\times \mathbb{R}^{n} \\
				X &\in &E_{m}\mapsto (F(m),\xi (X))
			\end{eqnarray*}%
			is an isometry between $E$ and the normal bundle of $F(M)$ into $\mathbb{R}%
			^{\left( n+m\right) },$ preserving connections and second fundamental forms.
		\end{enumerate}
		
		\begin{proof}
			\begin{enumerate}
				\item Let $X,Y\in \Gamma (TM\oplus E),$ consequently%
				\begin{eqnarray}
				\left\langle \xi (X),\xi (Y)\right\rangle &=&-\frac{1}{2}\left( \xi (X)\xi
				(Y)-\xi (Y)\xi (X)\right) \notag \\ &=&-\frac{1}{2}\left( \tau \lbrack \varphi
				][X][\varphi ]\tau \lbrack \varphi ][Y][\varphi ]-\tau \lbrack \varphi
				][Y][\varphi ]\tau \lbrack \varphi ][X][\varphi ]\right) \notag \\
				&=&-\frac{1}{2}\tau \lbrack \varphi ]\left( [X][Y]-[Y][X]\right) [\varphi
				]=\tau \lbrack \varphi ]\left( \left\langle X,Y\right\rangle \right)
				[\varphi ] \notag \\
				&=&\left\langle X,Y\right\rangle \tau \lbrack \varphi ][\varphi
				]=\left\langle X,Y\right\rangle .
				\end{eqnarray}
				This implies that $F$ is an isometry, and that $\Phi _{E}$ is a bundle map
				between $E$ and the normal bundle of $F(M)$ into $\mathbb{R}^{n}$ which
				preserves the metrics of the fibers.
				
				\item Denote by $B_{F}$ and $\nabla ^{\prime F}$ the second fundamental form
				and the normal connection of the immersion $F$. We want to show that:%
				\begin{eqnarray*}
					i)\xi (B(X,Y)) &=&B_{F}(\xi (X),\xi (Y)), \\
					ii)\xi (\nabla _{X}^{\prime }\eta ) &=&(\nabla _{\xi (X)}^{\prime F}\xi
					(\eta )),
				\end{eqnarray*}%
				for all $X,Y\in \Gamma (TM)$ and $\eta \in \Gamma (E)$.
				
				$i)$ First note that:%
				\begin{equation*}
				B^{F}(\xi (X),\xi (Y)):=\{\nabla _{\xi (X)}^{F}\xi (Y)\}^{\bot }=\{X(\xi
				(Y))\}^{\bot },
				\end{equation*}%
				where the superscript $\bot $ means that we consider the component of the
				vector which is normal to the immersion. We know that
				\begin{eqnarray}
				X(\xi (Y))&=&(id-\tau )\left\langle \left\langle \varphi ,\frac{1}{2}%
				\sum\nolimits_{j=1}^{m}Y\cdot e_{j}\cdot B(X,e_{j})\cdot \varphi -\frac{1}{2%
				}A^{l}(X)iY\cdot \varphi \right\rangle \right\rangle \notag \\
				&=&(id-\tau )\left\langle \left\langle \varphi ,\frac{1}{2}\left(
				\sum\nolimits_{j=1}^{m}\sum\nolimits_{k=1}^{m}y^{k}e_{k}\cdot e_{j}\cdot
				B(X,e_{j})  \right. \right. \right. \notag \\
				&&  -A^{l}(X)iY \Big) \cdot \varphi \bigg\rangle \bigg\rangle \notag \\
				&=&(id-\tau )\left\langle \left\langle \varphi ,\frac{1}{2}\left(
				\sum\nolimits_{j=1}^{m}y^{j}e_{j}\cdot e_{j}\cdot
				B(X,e_{j}) \right.\right. \right.  \notag \\
				& &\left.\left.\left. +\sum\nolimits_{j=1}^{m}\sum\nolimits_{k=1,k\neq j}^{m}y^{k}e_{k}\cdot e_{j}\cdot B(X,e_{j})-A^{l}(X)iY\right) \cdot \varphi
				\right\rangle \right\rangle \notag \\
				&=&(id-\tau )\left\langle \left\langle \varphi ,\frac{1}{2}\left(
				-B(X,Y)+D\right) \cdot \varphi \right\rangle \right\rangle ,
				\end{eqnarray}
				where%
				\begin{eqnarray*}
					D &=& \sum\nolimits_{j=1}^{m}\sum\nolimits_{k=1,k\neq j}^{m}y^{k}e_{k}\cdot
					e_{j}\cdot B(X,e_{j})-A^l(X)iY \\
					\tau \lbrack D] &=&[D].
				\end{eqnarray*}%
				Consequently%
				\begin{eqnarray*}
					X(\xi (Y)) &=&\frac{1}{2}(id-\tau )\left\langle \left\langle \varphi ,\left(
					-B(X,Y)+D\right) \cdot \varphi \right\rangle \right\rangle \\
					&=&-\tau \lbrack \varphi ]\tau \lbrack B(X,Y)][\varphi ]=\left\langle
					\left\langle \varphi ,B(X,Y)\cdot \varphi \right\rangle \right\rangle \\
					&=&\xi (B(X,Y)).
				\end{eqnarray*}%
				Therefore we conclude%
				\begin{eqnarray*}
					B_{F}(\xi (X),\xi (Y)) &:&=B^{F}(\xi (X),\xi (Y)):=\{\nabla _{\xi
						(X)}^{F}\xi (Y)\}^{\bot }=\{X(\xi (Y))\}^{\bot } \\
					&=&\{\xi (B(X,Y))\}^{\bot }=\xi (B(X,Y)),
				\end{eqnarray*}%
				here we used the fact that $F=\int \xi $ is an isometry: $B(X,Y)\in
				E\Rightarrow \xi (B(X,Y))\in TF(M)^{\bot }.$ Then $i)$ follows.
				
				$ii)$ First note that
				\begin{eqnarray*}
				\nabla _{\xi (X)}^{F}\xi (\eta )&=&\left\{ X(\xi (\eta ))\right\} ^{\bot
				}=\left\{ X\left\langle \left\langle \eta \cdot \varphi ,\varphi
				\right\rangle \right\rangle \right\} ^{\bot } \notag \\ &=& \left\langle \left\langle
				\nabla _{X}\eta \cdot \varphi ,\varphi \right\rangle \right\rangle ^{\bot
				}+\left\langle \left\langle \eta \cdot \nabla _{X}\varphi ,\varphi
				\right\rangle \right\rangle ^{\bot }+\left\langle \left\langle \eta \cdot
				\varphi ,\nabla _{X}\varphi \right\rangle \right\rangle ^{\bot }.
				\end{eqnarray*}%
				I will show that:%
				\begin{equation*}
				\left\langle \left\langle \eta \cdot \nabla _{X}\varphi ,\varphi
				\right\rangle \right\rangle ^{\bot }+\left\langle \left\langle \eta \cdot
				\varphi ,\nabla _{X}\varphi \right\rangle \right\rangle ^{\bot }=0.
				\end{equation*}%
				In fact
				\begin{eqnarray*}
				&&\left\langle \left\langle \eta \cdot \nabla _{X}\varphi ,\varphi
				\right\rangle \right\rangle +\left\langle \left\langle \eta \cdot \varphi
				,\nabla _{X}\varphi \right\rangle \right\rangle \notag \\
				&=&(id-\tau )\left\langle \left\langle \eta \cdot \nabla _{X}\varphi
				,\varphi \right\rangle \right\rangle \notag \\
				&=&(-id+\tau )\left\langle \left\langle \left[ \frac{1}{2}%
				\sum\nolimits_{j=1}^{m}\eta \cdot e_{j}\cdot B(X,e_{j})\cdot \varphi -\frac{%
					1}{2}A^{l}(X)i\eta \cdot \varphi \right] ,\varphi \right\rangle \right\rangle
				\notag \\
				&=&(-id+\tau )\left\langle \left\langle \left[ -\frac{1}{2}%
				\sum\nolimits_{j=1}^{m}\sum\nolimits_{p=1}^{n}\sum%
				\nolimits_{k=1}^{n}n^{p}b_{j}^{k}e_{j}\cdot f_{p}\cdot f_{k}-\frac{1}{2}%
				A^{l}(X)i\eta \right] \cdot \varphi ,\varphi \right\rangle \right\rangle \notag \\
				&=&(-id+\tau )\left\langle \left\langle \left[ \frac{1}{2}%
				\sum\nolimits_{j=1}^{m}\sum\nolimits_{p=1}^{n}n^{p}b_{j}^{p}e_{j} \right. \right. \right.  \notag \\
				&& \left. \left. \left. -\frac{1}{%
					2}\sum\nolimits_{j=1}^{m}\sum\nolimits_{p=1}^{n}\sum\nolimits_{k=1,k\neq
					p}^{n}n^{p}b_{j}^{k}e_{j}\cdot f_{l}\cdot f_{k}-\frac{1}{2}A^{l}(X)iN\right]
				\cdot \varphi ,\varphi \right\rangle \right\rangle ,
				\end{eqnarray*}
				from what%
				\begin{eqnarray*}
					&&\left\langle \left\langle N\cdot \nabla _{X}\varphi ,\varphi \right\rangle
					\right\rangle +\left\langle \left\langle N\cdot \varphi ,\nabla _{X}\varphi
					\right\rangle \right\rangle \\
					&=&\tau \lbrack \varphi ][\frac{1}{2}\sum_{j=1}^{n}%
					\sum_{l=1}^{m}n^{l}b_{j}^{l}e_{j}][\varphi ]+\tau \lbrack \varphi ][\frac{1}{%
						2}\sum_{j=1}^{n}\sum_{l=1}^{m}n^{l}b_{j}^{l}e_{j}][\varphi ] \\
					&=&\tau \lbrack \varphi
					][\sum_{j=1}^{n}\sum_{l=1}^{m}n^{l}b_{j}^{l}e_{j}][\varphi ]=\tau \lbrack
					\varphi ][V][\varphi ]=:\xi (V)\in TF(M) \\
					&\Rightarrow &\left\langle \left\langle \eta \cdot \nabla _{X}\varphi
					,\varphi \right\rangle \right\rangle ^{\bot }+\left\langle \left\langle \eta
					\cdot \varphi ,\nabla _{X}\varphi \right\rangle \right\rangle ^{\bot }=0.
				\end{eqnarray*}%
				In conclusion%
				\begin{equation*}
				\nabla _{\xi (X)}^{F}\xi (\eta )=\left\langle \left\langle \nabla _{X}\eta
				\cdot \varphi ,\varphi \right\rangle \right\rangle ^{\bot }=\left\langle
				\left\langle \nabla _{X}\eta \cdot \varphi ,\varphi \right\rangle
				\right\rangle ^{\bot }=\xi (\nabla _{X}\eta )^{\bot }=\xi (\nabla
				_{X}^{\prime }\eta ).
				\end{equation*}%
				At the end $ii)$ follows.
			\end{enumerate}
		
	\end{proof}
	
\end{lemma}
	
	With these Lemmas the theorem is proved.

\end{proof}

\end{document}